\documentclass{article}

\usepackage[letterpaper,margin=1.2in]{geometry}

\usepackage{amsmath,amssymb,amsthm}
\usepackage{MnSymbol}
\usepackage{enumerate}
\usepackage{hyperref}
\usepackage{mathtools}
\usepackage{bbm}
\usepackage{xcolor}
\usepackage{tikz}
\usepackage{tikz-cd}

\usepackage{comment}

\newtheorem{introthm}{Theorem}

\newtheorem{introcor}[introthm]{Corollary}

\newtheorem{theorem}{Theorem}[section]

\newtheorem{lemma}[theorem]{Lemma}

\newtheorem*{theorem*}{Theorem}

\numberwithin{equation}{section}
\numberwithin{figure}{section}

\theoremstyle{definition}

\theoremstyle{remark}
\newtheorem{remark}[theorem]{Remark}

\newcommand\R{\mathbb{R}}
\newcommand\C{\mathbb{C}}

\DeclareMathOperator{\re}{Re}

\DeclareMathOperator{\Tr}{Tr}
\DeclareMathOperator{\tr}{tr}

\DeclareMathOperator{\sa}{sa}

\DeclareMathOperator{\Lip}{Lip}

\DeclarePairedDelimiter{\norm}{\lVert}{\rVert}
\DeclarePairedDelimiter{\ip}{\langle}{\rangle}

\title{A tail bound for multi-matrix models with Lipschitz interaction}
\author{%
	\setcounter{footnote}{1}%
	Charles-Philippe Diez\thanks{%
		Technion-Israel Institute of Technology, Faculty of Mathematics,
		Technion City, Haifa 3200003, Israel.\\
		\textbf{chphdiez@campus.technion.ac.il}.}\:
	and David Jekel\thanks{%
		Department of Mathematics and Statistics, University of Ottawa,
		150 Louis-Pasteur Private, Ottawa, ON K1N 6N5 (Canada).
		\textbf{djekel@uottawa.ca}.}
}

\begin{document}
	
	\maketitle
	\begin{abstract}
		We give non-asymptotic operator-norm tail bounds for random
		multi-matrix models with quadratic plus globally Lipschitz
		interactions. More precisely, let $\mu_f$ be the probability measure
		on $(\mathbb{M}_n)_{\sa}^m$ with density proportional to
		$\exp(-n^2(\norm{x}_2^2/2+f(x)))$. If $f$ is $L$-Lipschitz
		with respect to $\norm{\cdot}_1$, we construct a coupling of
		$X\sim\mu_f$ and an $m$-tuple $Z$ of independent
		GUE matrices such that
		\[
		\norm{X-Z}_\infty\leq L
		\]
		almost surely, where $\norm{\cdot}_\infty$ is the maximum of the operator norms of the matrix tuple. This is an operator-norm analog of the $L^\infty$-Wasserstein bound of Khudiakova, Maas, and Pedrotti (2025).  We give a new proof based on the variational formula of Bou{\'e} and Dupuis (1998),
		the associated Hamilton--Jacobi--Bellman equation, and the duality between entropy and pressure.
	\end{abstract}
	\section{Introduction}
	
	The goal of this work is to give useful tail bounds for the operator norms of multi-matrix models.  We consider a probability measure on the space $(\mathbb{M}_n)_{\sa}^m$ of $m$-tuples of self-adjoint complex matrices of the form
	\begin{equation} \label{eq: matrix measure}
		d\mu_f(x) = \frac{1}{C_f} \exp(-n^2(f(x) + \norm{x}_2^2 / 2)) \,dx, \qquad C_f = \int_{(\mathbb{M}_n)_{\sa}^m} \exp(-n^2(f(x) + \norm{x}_2^2 / 2)) \,dx,
	\end{equation}
	where $x = (x_1,\dots,x_m) \in (\mathbb{M}_n)_{\sa}^m$, $f: (\mathbb{M}_n)_{\sa}^m \to \R$ is some function for which the integral converges, and
	\[
	\norm{x}_2^2 = \sum_{j=1}^m \tr_n(x_j^2),
	\]
	for $\tr_n = (1/n) \Tr_n$ the normalized trace.  Our main motivation is the setting where $f$ is given by a dimension-independent formula such as
	\[
	f(x) = \tr_n(p(x))
	\]
	for a noncommutative polynomial $p$.  In this case, the integral expressing the constant $C_f$ is often called a matrix integral (see e.g.\ \cite{ParraudSchnelli2025} for additional history and motivation).  The large-$n$ limiting behavior of a random matrix tuple $X^{(n)}$ chosen according to the measure $\mu_f^{(n)} = \mu_f$ above will be described by a \emph{free Gibbs law} (see \cite{BS2001,GMS2006,GS2009}), provided the required limits as $n \to \infty$ exist.  The limiting behavior is well understood in the case $m = 1$ \cite{BdMPS} or the case that $f(x) + \norm{x}_2^2/2$ satisfies certain convexity conditions and other suitable hypotheses \cite{BS1998,GMS2006,GS2009,Dabrowski2017Laplace,JekelEntropy,JekelExpectation,DGS2021,JLS2022,ParraudSchnelli2025,GMS2026}, but there is still not much known for general choices of noncommutative polynomials or noncommutative smooth functions.  In fact, general results for the limiting behavior would go a long way to completing the large deviations programme of \cite{BCG2003} and the unification of free entropy \cite{Voiculescu2002}.
	
	The aim of this work is more modest.  Most analyses of random matrix models as in \eqref{eq: matrix measure} in the large-$n$ rely on estimates for the probability
	\begin{equation} \label{eq: tail probability}
		\mu_f(\{x: \norm{x}_\infty \geq R\}) \text{ where } \norm{x}_\infty = \max_j \norm{x_j},
	\end{equation}
	where $\norm{\cdot}$ denotes the operator norm, which allows one to truncate to operator norm balls when studying convergence of noncommutative polynomials in expectation and almost surely.  In the case where $f = 0$, $\mu_f$ becomes the GUE measure and it is well known that operator norm is close to $2$ with high probability.  Similar quite sharp bounds can be obtained in the case where $f(x) + \norm{x}_2^2/2$ is strongly convex, for instance, using log-concave convex correlation inequalities as in \cite[Proof of Theorem 3.4]{GMS2006}.  For the non-convex setting, essentially the only known result is due to Guionnet and Maurel-Segala, which proves operator norm bounds for polynomial potentials that satisfy the $(A,\eta,I)$-\emph{trapping} condition \cite[Theorem 2.2]{GMS2026} which in turn follows from the \emph{confining} condition \cite[Theorem 2.5]{GMS2026}; the proof uses combinatorial manipulations as well as changes-of-variables generated by flows.  The class of trapping potentials and the class of convex potentials intersect nontrivially, but neither is contained in the other.
	
	Rather than the technically challenging setting of higher-order polynomial potentials, we will prove an estimate for \eqref{eq: tail probability} for general globally Lipschitz functions $f$.  Although unfortunately $f(x) = \tr_n(p(x))$ for a noncommutative polynomial $p$ is usually not globally Lipschitz, the Lipschitz assumption still covers many explicit cases of interest.  First, in the $m = 1$ case, one can take $f(x) = \tr_n(\varphi(x))$ where $\varphi: \R \to \R$ is a sufficiently smooth function applied through functional calculus.  In fact, Lipschitz single-variable functions $\varphi$ were considered in \cite{MM2014transportentropy}, which showed that the associated free Gibbs laws will satisfy $L^1$-Talagrand transportation inequalities.  In the $m \geq 2$ setting, one can take $f$ to be the trace of a resolvent polynomial, i.e.,
	\[
	f(x) = \tr_n(p((x_{j_1}+\zeta_1)^{-1}, \dots, (x_{j_k} + \zeta_k)^{-1})),
	\]
	where $p$ is a non-commutative polynomial and $\zeta_j \in \C \setminus \R$.  The use of resolvent-polynomial test functions goes back at least to \cite{BCG2003} which expressed behavior of self-adjoint matrices $x_j$ in terms of their Cayley transforms $(x_j + i)(x_j - i)^{-1}$.  They have been used more recently in the context of free Gibbs laws in \cite{Dabrowski2017Laplace,JekelEntropy,JekelExpectation,GJNPviscositysolution} due to their conveniently tame behavior at $\infty$ compared to plain noncommutative polynomials.  Another similar construction is to take
	\[
	f(x) = \tr_n(p(\varphi(x_1),\dots,\varphi(x_m))),
	\]
	where $\varphi \in C_c^\infty(\R)$.  This fits into the more general framework(s) of noncommutative smooth functions developed in \cite{DGS2021,JLS2022}.  For proof that such $f$ are indeed Lipschitz, first note that $p$ is $\norm{\cdot}_1$-Lipschitz on each $\norm{\cdot}_\infty$ and that $x \mapsto \varphi(x)$ is globally $\norm{\cdot}_\infty$-bounded and $\norm{\cdot}_1$-Lipschitz by e.g.\ \cite[\S 4.2]{JLS2022}.
	
	More precisely, our estimates will be expressed in terms of the Lipschitz norm of $f$ with respect to the noncommutative $L^1$ norm on matrix tuples,
	\[
	\norm{x}_1 = \sum_{j=1}^m \tr_n((x_j^*x_j)^{1/2}).
	\]
	Here note that since $\norm{\cdot}_1$ is dual to the $\infty$ norm, $\norm{x}_\infty = \max_j \norm{x_j}$, the Lipschitz constant with respect to $\norm{\cdot}_1$ corresponds to an $L^\infty$ bound for $\norm{\nabla f}_\infty$.  We state our main result as a non-asymptotic bound that holds for $n \times n$ random matrix models.  We will use $\gamma$ to denote the measure of an $n \times n$ GUE tuple, namely,
	\begin{equation} \label{eq: GUE tuple measure}
		d\gamma(x) = \text{(constant)} e^{-n^2 \norm{x}_2^2/2}\,dx \text{ for } x \in (\mathbb{M}_n)_{\sa}^m.
	\end{equation}
	
	\begin{introthm} \label{thm: main}
		Let $f: (\mathbb{M}_n)_{\sa}^m \to \R$ be a function satisfying
		\[
		|f(x) - f(y)| \leq L \sum_{j=1}^m \norm{x_j - y_j}_1,
		\]
		and let $\mu_f$ be the probability measure on $(\mathbb{M}_n)_{\sa}^m$ given by
		\[
		d\mu_f(x) = \text{\emph{(constant)}} e^{-n^2(\norm{x}_2^2/2 + f(x))}\,dx,
		\]
		or $d\mu / d\gamma \propto e^{-n^2 f}$.  Then there exist random variables $X \sim \mu_f$ and $Z \sim \gamma$ with $\norm{X - Z}_\infty \leq L$.
	\end{introthm}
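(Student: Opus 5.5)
We construct the coupling with a stochastic control (Föllmer-type) drift, in the spirit of the Boué--Dupuis formula mentioned in the abstract. Rescale so that $\gamma$ is the law at time $1$ of a Brownian motion $B_t$ on $(\mathbb{M}_n)_{\sa}^m$ whose covariance matches $\gamma$; under the inner product $\ip{x,y} = n^2\sum_j \tr_n(x_jy_j)$, this is standard Brownian motion on a real Euclidean space. Since $d\mu_f/d\gamma \propto e^{-n^2 f}$, set $F = n^2 f$ and consider the heat semigroup $P_t$ of $B$. Define the value function
\[
u(t,x) = -\log P_{1-t} e^{-F}(x) = -\log \mathbb{E}\, e^{-F(x + B_1 - B_t)}.
\]
Then $u$ solves the Hamilton--Jacobi--Bellman equation $\partial_t u + \tfrac12 \Delta u - \tfrac12 |\nabla u|^2 = 0$ with $u(1,\cdot) = F$. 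By Boué--Dupuis, $u(0,0) = \inf_v \mathbb{E}[F(B_1 + \int_0^1 v_s\,ds) + \tfrac12\int_0^1 |v_s|^2 ds]$, and the optimal drift is $v_t = -\nabla u(t, Y_t)$.

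The coupling is as follows. Let $Y$ solve $dY_t = -\nabla u(t,Y_t)\,dt + dB_t$ with $Y_0 = 0$. The standard Föllmer argument (Girsanov, using that $e^{-u(t,B_t)}$ is a martingale) gives $Y_1 \sim \mu_f$. Set $X = Y_1$ and $Z = B_1 \sim \gamma$. Then
\[
X - Z = -\int_0^1 \nabla u(t, Y_t)\,dt,
\]
so it suffices to show $\norm{\nabla u(t,y)}_\infty \le L$ in the correctly normalized sense, for every $t$ and $y$. Here $\nabla$ is the gradient for the $n^2\tr_n$ inner product. Translating this to $f$-units, the relevant statement is that $\frac{1}{n^2}u(t,\cdot)$ is $L$-Lipschitz with respect to $\norm{\cdot}_1$. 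Duality between $\norm{\cdot}_1$ and $\norm{\cdot}_\infty$ then gives a gradient (in the $\tr_n$ pairing) of operator norm at most $L$ in each coordinate. Integrating over $t \in [0,1]$ yields $\norm{X-Z}_\infty \le L$ almost surely.

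The key step is the Lipschitz preservation for $u$. Convexity of $\nabla$-norms does not obviously pass through $-\log$ of an average. Instead, use the entropy--pressure duality (Gibbs variational principle):
\[
u(t,x) = \inf_{\nu}\Big( \int F(x+w)\,d\nu(w) + H(\nu \mid \mathrm{Law}(B_1 - B_t)) \Big),
\]
where the infimum runs over probability measures $\nu$ and the relative entropy term does not depend on $x$. For each fixed $\nu$, the map $x \mapsto \int F(x+w)\,d\nu(w)$ is $n^2L$-Lipschitz in $\norm{\cdot}_1$, because it is an average of translates of an $n^2L$-Lipschitz function. An infimum of $n^2L$-Lipschitz functions, when finite, is again $n^2L$-Lipschitz. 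Hence $u(t,\cdot)$ is $n^2L$-Lipschitz for $\norm{\cdot}_1$, uniformly in $t$.

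The remaining technical obstacle is regularity and well-posedness of the SDE. $F$ is only Lipschitz, but $u(t,\cdot)$ is smooth for $t<1$ by Gaussian convolution, and $\nabla u$ is globally bounded by the Lipschitz estimate. The SDE therefore has a strong solution on $[0,1)$, and the drift integral extends to $t=1$ since the drift is bounded. The identification $\mathrm{Law}(Y_1) = \mu_f$ follows from Girsanov with a bounded drift, so Novikov's condition holds trivially. If this step causes trouble, I would first mollify $f$ (which preserves the $\norm{\cdot}_1$-Lipschitz constant). Then I would pass to the limit using tightness of couplings and the fact that the constraint $\norm{X-Z}_\infty \le L$ is closed under weak convergence of couplings.
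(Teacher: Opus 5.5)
Your proposal is correct, and it builds the same coupling as the paper. Your $u$ is $n^2$ times the paper's $u$, and your gradient is taken in the $n^2\tr_n$ inner product, so the drift $-\nabla u$, the SDE started at $0$, and the pair $(X,Z)=(Y_1,B_1)$ agree with the paper's. The bound $\norm{X-Z}_\infty\le L$ comes, as in the paper, from $\norm{\cdot}_1$-Lipschitz preservation of $u(t,\cdot)$ plus $L^1$--$L^\infty$ duality.

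The genuine difference is how you identify $\mathrm{Law}(Y_1)=\mu_f$.
\begin{itemize}
\item You use the classical F{\"o}llmer/Girsanov computation: the martingale $P_{1-t}e^{-F}(B_t)$ has terminal value proportional to $\frac{d\mu_f}{d\gamma}(B_1)$, and uniqueness in law transfers this to $Y$.
\item The paper never uses Girsanov. It uses It{\^o}'s formula and the HJB equation to show that $-\nabla u(t,X_t)$ attains the infimum in the Bou{\'e}--Dupuis formula. From optimality and entropy--pressure duality it deduces $\frac{1}{n^2}h(\nu\mid\gamma)+\int f\,d\nu=\mathcal{P}(f)$ for the terminal law $\nu$. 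It then concludes $\nu=\mu_f$ by uniqueness in the Gibbs variational principle.
\end{itemize}

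For Lipschitz preservation, you use the static Donsker--Varadhan formula, while the paper uses the control formula restarted at time $t$. The mechanism is the same: an infimum over an $x$-independent family of Lipschitz functions.

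Each route buys something different.
\begin{itemize}
\item Yours is more classical and self-contained in finite dimensions. It can handle Lipschitz $f$ directly, using smoothness of $u$ on $[0,1)$, bounded drift, and a limit $\epsilon\to0$ at the endpoint.
\item The paper first restricts to bounded $C^2$ $f$, which its version of Bou{\'e}--Dupuis and It{\^o} up to $t=1$ require, and then approximates. Its variational identification is chosen because it carries over to the free-probabilistic setting, where there are no densities and Gibbs laws are defined variationally.
\end{itemize}

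Finally, your remark that the Lipschitz bound does not obviously pass through $-\log$ of an average is too pessimistic. Since $e^{-F(x+w)}\le e^{n^2L\norm{x-y}_1}e^{-F(y+w)}$ for every $w$, integrating in $w$ and taking logarithms gives $u(t,y)\le u(t,x)+n^2L\norm{x-y}_1$ directly. No variational formula is needed for that step.
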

	
	Note that equality is achieved in the relation $\norm{X - Z}_\infty \leq L$ when we take $f(x) = L \sum_{j=1}^m \tr_n(x_j)$, since in this case $\mu_f$ is the translation of $\gamma$ by $L \cdot (1,\dots, 1)$.  We remark that the analogous result with the Euclidean norm $\norm{\cdot}_2$ rather than the operator norm $\norm{\cdot}_\infty$ in the conclusion is a special case of \cite[Theorem 2.1]{KhudiakovaMaasPedrotti2025}.  Namely, \cite{KhudiakovaMaasPedrotti2025} applies with the Gaussian measure replaced by a general strongly log-concave measure, but the conclusion bounds the $L^\infty$ Wasserstein distance specifically when the underlying norm is Euclidean.  However, we explain in Remark \ref{rem: alternative proof} that if the base measure is standard Gaussian, a version of the theorem holds for any norm on the underlying vector space.
	
	The following immediate corollary allows us to bound the tail probabilities for $X \sim \mu_f$ in terms of those for the GUE tuple $Z$.  We remark that the tail probability $\mathbb{P}( \norm{Z}_\infty \geq 2 + \delta)$ is well studied and there are many estimates for it; we include as a sample the Aubrun--Ledoux bound \cite{Ledoux2003remark,Aubrun2004} which has the correct small-deviation exponent.
	
	\begin{introcor}
		Let $f$ and $\mu_f$ be as in the previous theorem.  Let $X$ and $Z$ be random variables with distributions $\mu_f$ and $\gamma$ respectively.  Then for $\delta>0$,
		\begin{equation}
			\mathbb{P}(\norm{X}_\infty \geq L + 2 + \delta) \leq \mathbb{P}( \norm{Z}_\infty \geq 2 + \delta).
		\end{equation}
		The right-hand side in turn can be bounded by $(2m)C e^{-cn \delta^{3/2}}$ for some positive constants $C$ and $c$ by \cite{Ledoux2003remark,Aubrun2004}.
	\end{introcor}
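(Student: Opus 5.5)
The corollary follows at once from Theorem \ref{thm: main}; only the triangle inequality and monotonicity of probability are needed. Take the coupling $(X,Z)$ given by Theorem \ref{thm: main}, defined on a common probability space, with $X \sim \mu_f$, $Z \sim \gamma$ and $\norm{X - Z}_\infty \leq L$ almost surely. Since the statement of the corollary only involves the marginal distributions of $X$ and $Z$ separately, we may compute $\mathbb{P}(\norm{X}_\infty \geq L+2+\delta)$ on this coupling space.

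On this space the triangle inequality for $\norm{\cdot}_\infty$ gives, almost surely,
\[
\norm{X}_\infty \leq \norm{Z}_\infty + \norm{X - Z}_\infty \leq \norm{Z}_\infty + L.
\]
Hence the event $\{\norm{X}_\infty \geq L + 2 + \delta\}$ is contained, up to a null set, in the event $\{\norm{Z}_\infty \geq 2+\delta\}$. Monotonicity of probability then yields the displayed inequality. Note that $\norm{\cdot}_\infty = \max_j \norm{x_j}$ is indeed a norm, so the triangle inequality applies.

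For the final sentence, bound the tail of a tuple by a union over coordinates:
\[
\mathbb{P}(\norm{Z}_\infty \geq 2+\delta) \leq \sum_{j=1}^m \mathbb{P}(\norm{Z_j} \geq 2+\delta).
\]
Each $Z_j$ is a single $n\times n$ GUE matrix normalized so that its spectrum concentrates on $[-2,2]$, and $\norm{Z_j} = \max(\lambda_{\max}(Z_j), -\lambda_{\min}(Z_j))$. By the Aubrun--Ledoux bound \cite{Ledoux2003remark,Aubrun2004} and the symmetry $Z_j \mapsto -Z_j$ of the GUE, each of $\mathbb{P}(\lambda_{\max}(Z_j) \geq 2+\delta)$ and $\mathbb{P}(-\lambda_{\min}(Z_j) \geq 2+\delta)$ is at most $Ce^{-cn\delta^{3/2}}$. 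Summing these $2m$ terms gives $(2m)Ce^{-cn\delta^{3/2}}$.

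There is no genuine obstacle here. The only point requiring care is matching the normalization $e^{-n^2\norm{x}_2^2/2}$, with $\tr_n$ normalized, to the standard GUE scaling used in the cited references. Expanding, $n^2\tr_n(x^2)/2 = n\Tr(x^2)/2$, which is exactly the scaling with semicircle support $[-2,2]$, so the cited bounds apply as stated.
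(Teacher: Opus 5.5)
Your proof is correct and is the argument the paper intends when it calls the corollary immediate: you take the coupling from Theorem~\ref{thm: main}, use $\norm{Z}_\infty \geq \norm{X}_\infty - L$ to get the event inclusion, and get the factor $2m$ from a union bound over the $m$ coordinates and the two spectral edges before citing Aubrun--Ledoux. Your normalization check $n^2\tr_n(x^2)/2 = n\Tr(x^2)/2$ correctly confirms that each $Z_j$ is a standard GUE with semicircle support $[-2,2]$, so the cited bounds apply.
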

	
	We arrived at Theorem \ref{thm: main} from looking at several other instances where Lipschitz assumptions lead to unexpectedly strong a priori operator norm bounds \cite[Theorem 7.18]{JLS2022}, \cite[Corollary 5.6]{JekelTypeCoupling}, and \cite[Lemma 5.8]{Jekel2026FIG}.  The conclusion of \cite[Theorem 7.18]{JLS2022} for free Gibbs laws is quite analogous to Theorem \ref{thm: main} but with a more complicated proof using iterative moment bounds based on integration by parts.
	
	Our proof of Theorem \ref{thm: main} is based on techniques in PDE theory and stochastic optimization.  The usage of such techniques in random matrix theory is not new; we emphasize in particular that stochastic variational formulas were central to the work of Biane--Capitaine--Guionnet \cite{BCG2003} and Dabrowski \cite{Dabrowski2017Laplace}.  We apply a formula of Bou{\'e} and Dupuis \cite{BoueDupuis1998variational} which shows that
	\[
	-\frac{1}{n^2}
	\log \int_{(\mathbb{M}_n)_{\sa}^m}
	\exp(-n^2 f)\,d\gamma
	=
	\inf_{\alpha}
	\mathbb{E}\left[
	f\left(Z_1+\int_0^1\alpha_t\,dt\right)
	+\frac{1}{2}\int_0^1\norm{\alpha_t}_2^2\,dt
	\right].
	\]
	where $(Z_t)_{t \in [0,1]}$ is a matrix Brownian motion and where $\alpha_t$ ranges over certain progressively measurable processes.  Standard analysis of Hamilton--Jacobi--Bellman equations allows us to obtain a minimizer $\alpha_t$ explicitly in terms of the gradient of the solution to a certain PDE, which ultimately means that $\norm{\alpha_t}_\infty \leq L$ where $L$ is the $\norm{\cdot}_1$-Lipschitz constant of $f$.  Moreover, it follows from the SDE and PDE that the distribution of $\int_0^1 \alpha_t\,dt + Z_1$ is exactly $\mu_f$, which produces the desired coupling between a Gaussian element $Z_1$ and the variable $X = \int_0^1 \alpha_t\,dt + Z_1$ for Theorem \ref{thm: main}.
	
	Remark \ref{rem: alternative proof} explains how the bound can also be proved using the stationary solution of the Langevin SDE, following a similar method as in \cite[Theorem~2.1, Step~1]{KhudiakovaMaasPedrotti2025}.  For us, the value of the variational approach is that it can also be applied in the free probabilistic context of \cite[\S 5]{Jekel2026FIG}, where there is not a straightforward analog of density, the Langevin-stationary distribution may not be unique in general (and existence requires substantial work), and so it is natural to \emph{define} Gibbs distributions using a variational framework.
	
	The tools applied here in the random matrix context originate from the theory of stochastic control,
	Wasserstein geometry, and functional inequalities for probability measures on $\R^m$ (and more generally, Riemannian manifolds) which has a long history.  On the optimal transport side, Talagrand~\cite{Talagrand1996}
	proved that the squared quadratic Wasserstein distance to the
	standard Gaussian measure is bounded by twice the relative
	entropy. Otto and Villani~\cite{OttoVillani2000} generalized these results for weighted manifolds under curvature assumptions and established
	the implication from logarithmic Sobolev inequalities to quadratic
	transportation cost inequalities (Talagrand's transportation cost inequalities). Bobkov, Gentil, and
	Ledoux~\cite{BobkovGentilLedoux2001} developed a Hamilton--Jacobi
	approach to this connection, using hypercontractivity and
	infimum convolution. We refer to Gozlan and
	L{\'e}onard~\cite{GozlanLeonard2010} for a survey of
	transportation-cost inequalities and their relationships
	with concentration and other functional inequalities.
	
	A stochastic approach to these questions is provided by
	variational representations obtained by Bou{\'e} and
	Dupuis~\cite{BoueDupuis1998variational} who established such
	a representation for logarithmic Laplace transforms of
	Brownian functionals, with a cost given by the quadratic
	energy of the drift. Borell~\cite{Borell2000} used in particular these stochastic
	control methods to prove geometric inequalities, including
	the Pr{\'e}kopa--Leindler inequality and therefore convexity principle.
	Lehec~\cite{Lehec2013} developed the corresponding representation
	of Gaussian relative entropy, and
	obtained short, unified proofs of several functional inequalities,
	including Talagrand's transportation-cost inequality and the
	logarithmic Sobolev inequality even in the infinite dimensional setting of Wiener space. His habilitation
	thesis~\cite{LehecHDR} gives a nice overview and detailed account of these
	stochastic methods.
	More recently, stochastic coupling arguments have also been
	used to prove sharp symmetrized transportation-cost inequalities
	and their geometric counterparts. Fathi~\cite{Fathi2018}
	established a sharp symmetrized form of Talagrand's
	transport-entropy inequality under a centering assumption,
	which is dual to the functional Blaschke--Santal{\'o} inequality.
	Courtade, Fathi, and
	Mikulincer~\cite{CourtadeFathiMikulincer2024} subsequently
	gave a stochastic proof of this inequality, using a coupling
	induced by time-reversed martingale representations.
	Lehec~\cite{Lehec2026Santalo} then simplified this argument
	to obtain a direct stochastic proof of the functional
	Blaschke--Santal{\'o} inequality in convex geometry.
	
	The duality between entropy and pressure used below is the
	classical Legendre duality between relative entropy and the
	logarithmic Laplace transform; see Gozlan and
	L{\'e}onard~\cite[Appendix~B, Proposition~B.1]{GozlanLeonard2010}
	and Lehec~\cite{Lehec2013}.
	For a broader perspective on the relationship between entropy
	minimization on path space and optimal transport through the
	Schr{\"o}dinger problem, see L{\'e}onard~\cite{Leonard2014}.
	
	\section{Preliminaries}
	
	We write $\mathbb{M}_n = M_n(\mathbb{C})$ for the space of $n \times n$ complex matrices, $(\mathbb{M}_n)_{\sa}$ for the subspace of self-adjoint elements, and $\tr_n$ for the normalized trace $(1/n) \Tr_n$.  Consistent with von Neumann algebraic conventions, we use Schatten-$p$ norms (or noncommutative $L^p$ norms) of tuples of matrices with respect to the normalized trace, namely,
	\[
	\norm{(x_1,\dots,x_m)}_p = \left( \sum_{j=1}^m \tr_n(|x_j|^p) \right)^{1/p} \text{ for } p \in [1,\infty),
	\]
	where $|x_j| = (x_j^* x_j)^{1/2}$ is defined by functional calculus.  Meanwhile,
	\[
	\norm{(x_1,\dots,x_m)}_\infty = \max_{j=1,\dots,m} \norm{x_j}_\infty,
	\]
	where $\norm{\cdot}_\infty$ is the operator norm.  We remark that $\tr_n(|x|^p)$ can also be expressed as $(1/n) \sum_{j=1}^n \sigma_j^p$ where $\sigma_1$, \dots, $\sigma_n$ are the singular values and $\norm{x}_\infty = \max_{j=1,\dots,n} \sigma_j$.
	
	We write $\ip{x,y}_2 = \tr_n(x^*y)$.  Recall that the noncommutative $L^p$ norms satisfy the analog of H{\"o}lder's inequality and duality of $L^p$ and $L^q$ when $1/p + 1/q = 1$, with the dual pairing given by $\tr_n(xy)$.  Since $\mathbb{M}_n$ is finite-dimensional, this duality holds even for $L^\infty$.  For background, see e.g.\ \cite{Simon2005} or \cite{PX2003}.
	
	\begin{lemma}\label{lem: Lipschitz gradient}
		Let $f:(\mathbb{M}_n)_{\sa}^m\to\R$ be differentiable, where the gradient is taken with respect to the inner product
		\[
		\ip{x,y}_2 = \sum_{j=1}^m \tr_n(x_jy_j).
		\]
		If $f$ is $L$-Lipschitz with respect to $\norm{\cdot}_1$, then
		$\norm{\nabla f(x)}_\infty\leq L$
		for every $x\in(\mathbb{M}_n)_{\sa}^m$. Moreover, if $f$ is $C^1$, then
		\[
		\Lip_{\norm{\cdot}_1}(f)
		=
		\sup_{x\in(\mathbb{M}_n)_{\sa}^m}
		\norm{\nabla f(x)}_\infty.
		\]
	\end{lemma}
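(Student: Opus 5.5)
The plan is to use the duality between the $\norm{\cdot}_1$ and $\norm{\cdot}_\infty$ norms on $(\mathbb{M}_n)_{\sa}^m$ under the pairing $\ip{x,y}_2 = \sum_j \tr_n(x_jy_j)$. For tuples we have
\[
\norm{y}_\infty = \sup\{\ip{h,y}_2 : h \in (\mathbb{M}_n)_{\sa}^m,\ \norm{h}_1 \leq 1\}.
\]
This holds because $\norm{h}_1 = \sum_j \norm{h_j}_1$ is an $\ell^1$-sum, so its dual is the $\ell^\infty$-maximum of the componentwise dual norms. For a single self-adjoint $y_j$, the supremum of $\tr_n(h_jy_j)$ over self-adjoint $h_j$ with $\tr_n|h_j| \leq 1$ equals $\norm{y_j}_\infty$. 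The bound $\leq$ follows from H\"older's inequality. For the reverse bound, let $v$ be a unit eigenvector of $y_j$ whose eigenvalue $\lambda$ satisfies $|\lambda| = \norm{y_j}_\infty$, and take $h_j = \pm n\, vv^*$, with the sign chosen as $\operatorname{sign}(\lambda)$. Then $\tr_n|h_j| = 1$ and $\tr_n(h_jy_j) = |\lambda|$. Putting $h$ in the coordinate $j$ achieving the maximum and zero elsewhere gives the tuple statement. The point to watch is that $h$ must be restricted to self-adjoint tuples, since the gradient lives there; the rank-one projection handles this.

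Next I prove the first claim. Fix $x$ and $h$ with $\norm{h}_1 \leq 1$. By differentiability,
\[
\ip{\nabla f(x), h}_2 = \lim_{t\to 0^+} \frac{f(x+th)-f(x)}{t} \leq \lim_{t \to 0^+} \frac{L\norm{th}_1}{t} \leq L.
\]
Taking the supremum over $h$ and applying the duality above gives $\norm{\nabla f(x)}_\infty \leq L$. With $f$ $L$-Lipschitz for $L = \Lip_{\norm{\cdot}_1}(f)$, this also gives $\sup_x \norm{\nabla f(x)}_\infty \leq \Lip_{\norm{\cdot}_1}(f)$.

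For the reverse inequality when $f$ is $C^1$, set $M = \sup_x \norm{\nabla f(x)}_\infty$. The fundamental theorem of calculus along the segment from $y$ to $x$ gives
\[
f(x)-f(y) = \int_0^1 \ip{\nabla f(y+s(x-y)), x-y}_2\,ds.
\]
H\"older's inequality for tuples, $|\ip{a,b}_2| \leq \sum_j \norm{a_j}_\infty\norm{b_j}_1 \leq \norm{a}_\infty\norm{b}_1$, then yields $|f(x)-f(y)| \leq M\norm{x-y}_1$. Continuity of $\nabla f$ guarantees that the integrand is integrable. The two inequalities together give the equality. None of these steps is a real obstacle; the only care needed is the self-adjointness in the duality step.
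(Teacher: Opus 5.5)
Your proposal is correct and follows the paper's proof. The first claim comes from the directional-derivative bound plus the duality between $\norm{\cdot}_1$ and $\norm{\cdot}_\infty$, and the converse from the fundamental theorem of calculus with H\"older's inequality. The only difference is that you verify the duality explicitly, using the self-adjoint rank-one test element $\pm n\,vv^*$, whereas the paper simply recalls it.
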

	
	\begin{proof}
		We first recall that $\norm{\cdot}_1$ and $\norm{\cdot}_\infty$ are dual with respect to $\ip{\cdot,\cdot}_2$, that is 
		\[\underset{\norm{h}_1 \leq 1}{\sup}\lvert \langle a,h\rangle_2\rvert=\norm{a}_{\infty}
		\]
		\newline
		Now suppose that $f$ is $L$-Lipschitz with respect to $\norm{\cdot}_1$. For every $x,h\in(\mathbb{M}_n)_{\sa}^m$,
		\[
		\left|\ip{\nabla f(x),h}_2\right|
		=
		\left|\left.\frac{d}{dt}f(x+th)\right|_{t=0}\right|
		\leq
		L\norm{h}_1.
		\]
		Taking the supremum over $\norm{h}_1\leq1$ and using this duality yields
		\[
		\norm{\nabla f(x)}_\infty\leq L.
		\]
		In particular,
		\[
		\sup_{x\in(\mathbb{M}_n)_{\sa}^m}
		\norm{\nabla f(x)}_\infty
		\leq
		\Lip_{\norm{\cdot}_1}(f).
		\]
		For the converse, suppose that $f$ is $C^1$ and that
		\(
		K=\sup_{x\in(\mathbb{M}_n)_{\sa}^m}
		\norm{\nabla f(x)}_\infty<\infty.
		\)
		Then, for every $x,y\in(\mathbb{M}_n)_{\sa}^m$, the fundamental theorem of calculus and H\"older's inequality give
		\begin{align*}
			|f(y)-f(x)|
			&=
			\left|
			\int_0^1
			\ip{\nabla f(x+t(y-x)),y-x}_2\,dt
			\right|\\
			&\leq
			\int_0^1
			\norm{\nabla f(x+t(y-x))}_\infty
			\norm{y-x}_1\,dt\\
			&\leq
			K\norm{y-x}_1.
		\end{align*}
		Hence $\Lip_{\norm{\cdot}_1}(f)\leq K$, which, together with the first inequality, proves
		\[
		\Lip_{\norm{\cdot}_1}(f)
		=
		\sup_{x\in(\mathbb{M}_n)_{\sa}^m}
		\norm{\nabla f(x)}_\infty.
		\]
		If $K=\infty$, the first inequality already implies that
		$\Lip_{\norm{\cdot}_1}(f)=\infty$, so the identity holds in this case as well.
	\end{proof}
	
	Next, we recall some basic facts about relative entropy for probability measures, which is defined as
	\[
	h(\mu \mid \nu) = \int \frac{d\mu}{d\nu} \log \frac{d\mu}{d\nu} \,d\nu
	\]
	whenever $\mu \ll \nu$, and $\infty$ otherwise.
	For any two probability measures $\mu$ and $\nu$
	with $\mu \ll \nu$, writing $r=d\mu/d\nu$ gives the following representation (since $\mu,\nu$ are probability measures)
	\[
	h(\mu\mid\nu) = \int (r\log r-r+1)\,d\nu \geq 0,
	\]
	since $r\log r-r+1\geq 0,\: r\geq 0$ with equality if and only if $r=1$ $\nu$-almost everywhere,
	that is, $\mu=\nu$. If $\mu\not\ll\nu$, then
	$h(\mu\mid\nu)=\infty$ by definition.
	
	We will use a dimensional renormalization which is standard in dealing with free Gibbs laws and the associated matrix models.  The entropic quantity of interest in the large-$n$ limit will be $(1/n^2) h(\mu \mid \gamma)$ where $\gamma$ is the multi-GUE measure \eqref{eq: GUE tuple measure}.  The densities of interest are given by $\exp(-n^2 f)$ where $f$ is a function expressed in a dimension-independent way using the normalized trace.  We therefore use the following normalized version of pressure:
	\[
	\mathcal{P}(f) := -\frac{1}{n^2} \log \int_{(\mathbb{M}_n)_{\sa}^m} \exp(-n^2 f)\,d\gamma,
	\]
	for all $f: (\mathbb{M}_n)_{\sa}^m \to \R$ such that the integral converges.  We remark that straightforward and standard arguments show that if $f$ is Lipschitz, then the integral is finite due to the $\exp(-n^2 \norm{x}_2^2/2)$ term in the density of $\gamma$.
	
	We rely on the variational principles that express entropy and pressure as ``Legendre transforms'' of each other.
	
	\begin{lemma}\label{lem: entropy minimizer}
		Let $f:(\mathbb{M}_n)_{\sa}^m\to\R$ be Lipschitz with respect to $\norm{\cdot}_1$.
		Then $\mu_f$ is the unique minimizer of
		\[
		\mu \mapsto \frac{1}{n^2}h(\mu \mid \gamma) + \int f\,d\mu
		\]
		over all Borel probability measures $\mu$ on $(\mathbb{M}_n)_{\sa}^m$
		such that $\int |f|\,d\mu<\infty$, and the minimum is equal to
		$\mathcal{P}(f)$.
	\end{lemma}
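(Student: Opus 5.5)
The plan is the standard Gibbs variational principle, via a change of reference measure from $\gamma$ to $\mu_f$. First I check that $\mu_f$ is well defined and admissible. Since $f$ is $L$-Lipschitz with respect to $\norm{\cdot}_1$, we have $|f(x)| \leq |f(0)| + L\norm{x}_1 \leq |f(0)| + L\sqrt{m}\norm{x}_2$, using $\tr_n|x_j| \leq \tr_n(x_j^2)^{1/2}$ and Cauchy--Schwarz over $j$. Hence $e^{-n^2 f}$ and $|f|e^{-n^2 f}$ are $\gamma$-integrable, because Gaussian tails beat linear growth. So $Z := \int e^{-n^2 f}\,d\gamma \in (0,\infty)$, $\mathcal{P}(f) = -\frac{1}{n^2}\log Z$ is finite, $d\mu_f/d\gamma = e^{-n^2 f}/Z$, and $\int |f|\,d\mu_f < \infty$.

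Next, take any Borel probability measure $\mu$ with $\int |f|\,d\mu < \infty$. If $h(\mu\mid\gamma) = \infty$, the functional is $+\infty$ and there is nothing to prove. Otherwise $\mu \ll \gamma$, and since $\gamma$ and $\mu_f$ are mutually absolutely continuous (the density $e^{-n^2 f}/Z$ is strictly positive), also $\mu \ll \mu_f$. The chain rule gives
\[
\log\frac{d\mu}{d\gamma} = \log\frac{d\mu}{d\mu_f} + \log\frac{d\mu_f}{d\gamma} = \log\frac{d\mu}{d\mu_f} - n^2 f - \log Z \quad \mu\text{-a.e.}
\]
Integrating against $\mu$ and dividing by $n^2$ yields the key identity
\[
\frac{1}{n^2}h(\mu\mid\gamma) + \int f\,d\mu = \frac{1}{n^2}h(\mu\mid\mu_f) + \mathcal{P}(f).
\]
Given this identity, nonnegativity of relative entropy shows the left side is at least $\mathcal{P}(f)$, with equality exactly when $h(\mu\mid\mu_f) = 0$, i.e.\ $\mu = \mu_f$ by the equality case recalled above. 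Applying the identity with $\mu = \mu_f$ gives the value $\mathcal{P}(f)$, so the minimum is attained, equals $\mathcal{P}(f)$, and the minimizer is unique. The same identity also shows that $h(\mu\mid\gamma) < \infty$ if and only if $h(\mu\mid\mu_f) < \infty$ for such $\mu$.

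The only delicate step is justifying the integration that produces the identity, since $\log\frac{d\mu}{d\gamma}$ and $\log\frac{d\mu}{d\mu_f}$ are not a priori $\mu$-integrable. I would handle this as follows. The term $-n^2 f - \log Z$ is $\mu$-integrable by the assumption $\int |f|\,d\mu < \infty$. Each relative entropy is well defined in $[0,\infty]$, because its integrand has negative part bounded by $e^{-1}$ times the corresponding reference density (the function $r\log r$ is bounded below by $-e^{-1}$). So integrating the pointwise identity is legitimate in the extended sense: adding an integrable function to a function whose integral exists in $(-\infty,\infty]$ preserves that existence and adds the integrals. Hence both sides are equal in $(-\infty,\infty]$, including the case where both are infinite.
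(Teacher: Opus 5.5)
Your proposal is correct and follows the paper's proof. Both rest on the identity $\frac{1}{n^2}h(\mu\mid\gamma)+\int f\,d\mu=\mathcal{P}(f)+\frac{1}{n^2}h(\mu\mid\mu_f)$ together with nonnegativity of $h(\mu\mid\mu_f)$ and its equality case, and you additionally justify what the paper leaves implicit: admissibility of $\mu_f$ via $\norm{x}_1\le\sqrt{m}\norm{x}_2$, and the extended-sense integration of the chain-rule identity when the entropies may be infinite.
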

	
	\begin{proof}
		This is the classical Gibbs variational principle; see, for instance,
		the entropy duality in Gozlan-Léonard~\cite[Appendix~B, Proposition~B.1]{GozlanLeonard2010} or Lehec~\cite[Equation~(2)]{Lehec2013}.
		For completeness, since
		\[
		\log\frac{d\mu_f}{d\gamma}
		=
		-n^2f+n^2\mathcal{P}(f),
		\]
		we have, for every Borel probability measure $\mu$ satisfying
		$\int |f|\,d\mu<\infty$,
		\[
		\frac{1}{n^2}h(\mu \mid \gamma)+\int f\,d\mu
		=
		\mathcal{P}(f)+\frac{1}{n^2}h(\mu \mid \mu_f).
		\]
		Moreover, since $f$ has at most linear growth and $\gamma$ is Gaussian,
		$\int |f|\,d\mu_f<\infty$, so $\mu_f$ belongs to the minimization domain.
		The result follows because $h(\mu \mid \mu_f)\geq0$, with equality
		if and only if $\mu=\mu_f$.
	\end{proof}

	\begin{lemma}\label{lem: Legendre transform of pressure}
		For any Borel probability measure $\nu$,
		\[
		\frac{1}{n^2} h(\nu \mid \gamma) = \sup_{g \in  C_b((\mathbb{M}_n)_{\sa}^m)} (\mathcal{P}(g) - \int g\,d\nu).
		\]
		where $C_b((\mathbb{M}_n)_{\sa}^m)$ is the space of bounded continuous real-valued functions on $(\mathbb{M}_n)_{\sa}^m$.
	\end{lemma}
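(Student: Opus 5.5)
The plan is to prove the two inequalities separately: ``$\geq$'' from the Gibbs variational principle in the form of Lemma~\ref{lem: entropy minimizer}, and ``$\leq$'' by inserting a suitable truncation of the log-density into the supremum. This is the Donsker--Varadhan duality. The normalization $1/n^2$ is harmless: replacing $g$ by $g/n^2$ reduces everything to the standard formula $h(\nu\mid\gamma)=\sup_{\phi\in C_b}\bigl(\int\phi\,d\nu-\log\int e^{\phi}\,d\gamma\bigr)$ with $\phi=-n^2g$. I will argue directly in this notation.

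\emph{Step 1 (upper bound on the supremum).} Fix $g\in C_b$ and let $\mu_g$ be the probability measure with $d\mu_g/d\gamma=e^{-n^2 g}/\int e^{-n^2 g}\,d\gamma$; this is well defined since $g$ is bounded. If $h(\nu\mid\gamma)=\infty$ there is nothing to prove. Otherwise $\nu\ll\gamma$, and the same computation as in the proof of Lemma~\ref{lem: entropy minimizer} gives
\[
\frac{1}{n^2}h(\nu\mid\gamma)+\int g\,d\nu=\mathcal{P}(g)+\frac{1}{n^2}h(\nu\mid\mu_g)\geq\mathcal{P}(g).
\]
The integrals are finite because $g$ is bounded, and the splitting of the logarithm is justified since $\log(d\mu_g/d\gamma)$ is bounded. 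Lemma~\ref{lem: entropy minimizer} is stated for Lipschitz $f$, but only boundedness, or integrability of $f$, is used in the displayed identity, so I will redo this one line rather than cite the lemma verbatim. This gives $\sup_g(\mathcal{P}(g)-\int g\,d\nu)\leq\frac{1}{n^2}h(\nu\mid\gamma)$.

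\emph{Step 2 (lower bound).} The main obstacle is to show that the supremum is at least $\frac{1}{n^2}h(\nu\mid\gamma)$, including the case where the entropy is infinite. I would argue in two cases.

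\emph{Case $\nu\not\ll\gamma$.} Pick a Borel set $A$ with $\gamma(A)=0<\nu(A)$. By regularity of both measures, choose a compact $K\subset A$ with $\nu(K)>0$ and open sets $U_k\supset K$ with $\gamma(U_k)\to 0$. Urysohn functions $0\leq\phi_k\leq 1$, equal to $1$ on $K$ and supported in $U_k$, give test functions $g=-t\phi_k/n^2$ with
\[
\mathcal{P}(g)-\int g\,d\nu\geq\frac{1}{n^2}\bigl(t\nu(K)-\log(1+e^{t}\gamma(U_k))\bigr).
\]
Letting $k\to\infty$ and then $t\to\infty$ sends this to $\infty$.

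\emph{Case $\nu\ll\gamma$ with $r=d\nu/d\gamma$.} Take $\phi_N=\max(\min(\log r,N),-N)$, which is bounded but only measurable. For this $\phi_N$ I will show
\[
\int\phi_N\,d\nu-\log\int e^{\phi_N}\,d\gamma\to h(\nu\mid\gamma),
\]
using monotone convergence on $\{r\geq 1\}$ and dominated convergence on $\{r<1\}$. For $\int e^{\phi_N}\,d\gamma$, note that it lies between $\int_{r\geq e^{-N}} r\wedge e^N\,d\gamma$ and $1+e^{-N}$, hence tends to $1$. This argument is valid whether $h(\nu\mid\gamma)$ is finite or infinite.

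\emph{Step 3 (continuity).} It remains to pass from bounded measurable to bounded continuous $g$. By Lusin's theorem applied to the finite measure $\gamma+\nu$, approximate $\phi_N$ by continuous $\psi$ with $|\psi|\leq N$, agreeing with $\phi_N$ outside a set of $(\gamma+\nu)$-measure less than $\varepsilon$. Then both $\int\psi\,d\nu$ and $\int e^{\psi}\,d\gamma$ change by at most $O(N\varepsilon)$ and $O(e^N\varepsilon)$ respectively. Setting $g=-\psi/n^2$ and letting $\varepsilon\to 0$, then $N\to\infty$, gives the reverse inequality and completes the proof.
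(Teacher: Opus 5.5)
Your proof is correct. The paper does not actually prove this lemma. It cites the classical duality between relative entropy and the logarithmic Laplace transform (Gozlan--L\'eonard, Prop.~B.1, or Lehec, Eq.~(2)), whereas you reprove that Donsker--Varadhan formula directly.

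Your Step~1 is the same Gibbs-inequality computation the paper uses in Lemma~\ref{lem: entropy minimizer} and in the lemma right after this one (via $h(\cdot\mid\mu_g)\geq 0$). You are right to redo it rather than cite Lemma~\ref{lem: entropy minimizer}, since a bounded continuous $g$ need not be $\norm{\cdot}_1$-Lipschitz.

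Your Steps~2--3 supply exactly what the paper outsources. The lemma that follows handles only $\nu=\mu_f$ with $f\in C_b$, where the supremum is attained at $g=f$. For general $\nu$, the formal optimizer $g=-\frac{1}{n^2}\log\frac{d\nu}{d\gamma}$ is typically unbounded and discontinuous, and it does not exist when $\nu\not\ll\gamma$. So one genuinely needs your truncation $\phi_N$, the Lusin approximation, and the regularity/Urysohn argument for the singular case, and these all check out. In Step~3, the only thing worth adding is that $\int e^{\psi}\,d\gamma\geq e^{-N}$. The logarithm is therefore Lipschitz on the relevant range, so the $O(e^N\varepsilon)$ error in $\int e^{\psi}\,d\gamma$ does give a vanishing error in $\log\int e^{\psi}\,d\gamma$.

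The citation buys brevity. Your argument buys a self-contained proof that makes explicit why restricting the supremum to $C_b$ loses nothing, which is exactly the form in which the lemma is used in the proof of Theorem~\ref{thm: smooth case}.
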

	\begin{proof}
		This follows from the classical duality between relative entropy
		and the logarithmic Laplace transform; see Gozlan-Léonard~\cite[Appendix~B, Proposition~B.1]{GozlanLeonard2010} or Lehec~\cite[Equation~(2)]{Lehec2013}.
	\end{proof}
	\begin{lemma}
		If $f \in C_b((\mathbb{M}_n)_{\sa}^m)$,
		\[
		\frac{1}{n^2} h(\mu_f \mid \gamma) = \mathcal{P}(f) - \int f\,d\mu_f = \sup_{g\in C_b((\mathbb{M}_n)_{\sa}^m)} \left[ \mathcal{P}(g) - \int g\,d\mu_f \right]
		\]
	\end{lemma}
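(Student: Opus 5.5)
The plan is to prove the two equalities separately: first compute the relative entropy directly from the density, then identify the supremum using Lemma \ref{lem: Legendre transform of pressure} and the Gibbs variational principle.

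\emph{First equality.} Since $f$ is bounded and continuous, $\int e^{-n^2 f}\,d\gamma \in (0,\infty)$. Hence $\mu_f$ is well defined, and
\[
\log \frac{d\mu_f}{d\gamma} = -n^2 f + n^2 \mathcal{P}(f),
\]
exactly as in the proof of Lemma \ref{lem: entropy minimizer}. This log-density is bounded, so it is $\mu_f$-integrable. Integrating it against $\mu_f$ and dividing by $n^2$ gives
\[
\frac{1}{n^2}h(\mu_f \mid \gamma) = \mathcal{P}(f) - \int f\,d\mu_f .
\]

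\emph{Second equality.} The quickest route is to apply Lemma \ref{lem: Legendre transform of pressure} with $\nu = \mu_f$. This gives
\[
\frac{1}{n^2}h(\mu_f\mid\gamma) = \sup_{g \in C_b} \Bigl( \mathcal{P}(g) - \int g\,d\mu_f \Bigr)
\]
immediately, and combining with the first equality finishes the proof.

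To be self-contained and also show the supremum is attained at $g=f$, I would argue as follows.
\begin{enumerate}[(i)]
\item The choice $g=f$ is admissible, and by the first equality it gives the value $\frac{1}{n^2}h(\mu_f\mid\gamma)$. So the supremum is at least this quantity.
\item For the upper bound, fix any $g\in C_b$. By the Gibbs variational principle, $\mathcal{P}(g)$ is the minimum of $\mu\mapsto \frac{1}{n^2}h(\mu\mid\gamma)+\int g\,d\mu$. Testing this at $\mu=\mu_f$ is legitimate because $g$ is bounded, so $\int |g|\,d\mu_f<\infty$. This gives
\[
\mathcal{P}(g) \le \frac{1}{n^2}h(\mu_f\mid\gamma)+\int g\,d\mu_f ,
\]
which rearranges to $\mathcal{P}(g)-\int g\,d\mu_f \le \frac{1}{n^2}h(\mu_f\mid\gamma)$.
\end{enumerate}

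One technicality arises in step (ii). Lemma \ref{lem: entropy minimizer} is stated for $\norm{\cdot}_1$-Lipschitz $f$, whereas $g$ is only bounded and continuous. The identity
\[
\frac{1}{n^2}h(\mu\mid\gamma)+\int g\,d\mu=\mathcal{P}(g)+\frac{1}{n^2}h(\mu\mid\mu_g)
\]
holds verbatim for bounded measurable $g$. It only requires that $\mu_g$ be a probability measure equivalent to $\gamma$ with bounded log-density, so $h(\mu\mid\gamma)$ and $h(\mu\mid\mu_g)$ are simultaneously finite or infinite. I would re-derive this inline rather than cite the lemma. With that, both equalities follow, and I expect no real obstacle beyond this bookkeeping of finiteness.
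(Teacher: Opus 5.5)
Your proposal is correct and, in its self-contained form, is essentially the paper's proof. The paper computes $\frac{1}{n^2}h(\mu_f\mid\mu_g)=\mathcal{P}(f)-\mathcal{P}(g)-\int(f-g)\,d\mu_f$ directly and uses $h(\mu_f\mid\mu_g)\geq 0$, with equality at $g=f$; this is exactly your Gibbs identity for bounded $g$ tested at $\mu=\mu_f$ and combined with the first equality (your one-line shortcut via the Legendre-transform lemma is also valid, but the paper verifies this special case by hand instead).
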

	
	\begin{proof}
		Note that
		\[
		\frac{d\mu_f}{d\gamma} = \frac{\exp(-n^2 f)}{\int \exp(-n^2 f)\,d\gamma},  
		\]
		and so
		\begin{align*}
			\frac{1}{n^2} h(\mu_f \mid \gamma) &= \frac{1}{n^2} \int (-n^2 f - \log \int \exp(-n^2 f)\,d\gamma) \frac{d\mu_f}{d\gamma} \,d\gamma \\
			&= -\int f\,d\mu_f + \mathcal{P}(f).
		\end{align*}
		We also observe that for every $g\in C_b((\mathbb{M}_n)_{\sa}^m)$
		\[
		\exp(n^2 \mathcal{P}(f)) = \int \exp(-n^2 f)\,d\gamma = \exp(n^2 \mathcal{P}(g)) \int \exp(-n^2(f-g))\,d\mu_g.
		\]
		and
		\[
		\frac{d\mu_f}{d\mu_g} = \frac{\exp(-n^2(f-g))}{\int \exp(-n^2(f-g))\,d\mu_g},
		\]
		and so similar computations as for $\mu_f$ and $\gamma$ yield
		\[
		\frac{1}{n^2} h(\mu_f \mid \mu_g) = \mathcal{P}(f) - \mathcal{P}(g) - \int (f-g)\,d\mu_f.
		\]
		Since $h(\mu_f \mid \mu_g) \geq 0$, we get
		\[
		\mathcal{P}(g) - \int g\,d\mu_f \leq \mathcal{P}(f) - \int f\,d\mu_f
		\]
		and equality is attained at $g=f$, which proves the lemma.
	\end{proof}
	
	\section{Proof of the main result via stochastic optimization}
	
	The proof of Theorem \ref{thm: main} is based on a stochastic variational formula for the pressure.  We state this result for the random matrix context with appropriate dimensional renormalization.  Since many works use the unnormalized trace $\Tr_n$ instead of $\tr_n$ to define the inner product, we will describe our normalization here explicitly.  First recall that $\mathbb{M}_n$ with the normalized inner product $\ip{\cdot,\cdot}_2$ admits an orthonormal basis
	\begin{equation} \label{eq: ONB}
		\mathcal{B} = \{ \sqrt{n} E_{j,j} \}_{j=1}^n \cup \{ \sqrt{n/2} (E_{j,k} + E_{k,j}) \}_{j < k} \cup \{ \sqrt{n/2} (i E_{j,k} - i E_{k,j} ) \}_{j < k}.
	\end{equation}
	It is not hard to see that an element with the GUE distribution \eqref{eq: GUE tuple measure} for $m = 1$ can be given as $\frac{1}{n} \sum_{b \in \mathcal{B}} Y_b b$ where $Y_b$ are independent standard normal variables, i.e.\ $Z$ is $1/n$ times a standard Gaussian vector in the inner product space $((\mathbb{M}_n)_{\sa}, \ip{\cdot,\cdot}_2)$.  The corresponding normalizations with respect to $n$ apply for a Gaussian $m$-tuple \eqref{eq: GUE tuple measure} and the Brownian motion in $(\mathbb{M}_n)_{\sa}^m$.  For further discussion on normalization, see \cite[Appendix B]{ST2022fractional} and 
	\cite[\S 3.3]{JekelPi2024}.  Applying the dimensional renormalization to the formula of Bou{\'e}--Dupuis and Borell, we obtain the following (see \cite[Lemma 6.2]{GJNPmatrixcontrols}).
	
	\begin{lemma}[See {\cite{BoueDupuis1998variational,Borell2000}}] \label{lem: BD formula for matrices}
		Consider the augmented filtration $\mathcal{G}_t$ generated by a Hermitian Brownian motion $(Z_{t,j}^{(n)})_{t \in [0,1],j \in [m]}$.  Let $f: (\mathbb{M}_n)_{\sa}^m \to \R$ be bounded and Borel.  Then
		\begin{equation} \label{eq: matrix Borell formula}
			\mathcal{P}(f) = -\frac{1}{n^2} \log \int_{(\mathbb{M}_n)_{\sa}^m} \exp(-n^2f) \,d\gamma = \inf_{\alpha} \mathbb{E} \left[ f\left(Z_1 + \int_0^1 \alpha \right) + \frac{1}{2} \int_0^1 \norm{\alpha}_2^2 \,dt \right],
		\end{equation}
		where $Z_t = (Z_{t,1}, \dots, Z_{t,m})$ and $\alpha = (\alpha_1,\dots,\alpha_m): \Omega \times [0,1] \to (\mathbb{M}_n)_{\sa}^m$ is progressively measurable satisfying
		\[
		\mathbb{E}\int_0^1\norm{\alpha_t}_2^2\,dt<\infty.
		\]
	\end{lemma}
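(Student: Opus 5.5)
The plan is to reduce Lemma \ref{lem: BD formula for matrices} to the classical Bou{\'e}--Dupuis / Borell formula for a standard Brownian motion in $\R^d$ by a linear change of variables, and then to track the factors of $n$. Fix $d = m n^2$ and identify $((\mathbb{M}_n)_{\sa}^m, \ip{\cdot,\cdot}_2)$ isometrically with $\R^d$ via the orthonormal basis $\mathcal{B}$ from \eqref{eq: ONB}, taken in each coordinate. By the discussion preceding the lemma, the Hermitian Brownian motion satisfies $Z_t = \frac{1}{n} W_t$, where $W$ is a standard $d$-dimensional Brownian motion in these coordinates. In particular, $Z_1 \sim \gamma$, and the augmented filtration of $Z$ coincides with that of $W$. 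The classical formula states that for bounded Borel $F:\R^d \to \R$,
\[
-\log \mathbb{E}\, e^{-F(W_1)} = \inf_{v} \mathbb{E}\left[ F\Big(W_1 + \int_0^1 v_t\,dt\Big) + \frac12 \int_0^1 |v_t|^2\,dt \right],
\]
with the infimum over progressively measurable $v$ satisfying $\mathbb{E}\int_0^1|v_t|^2\,dt<\infty$.

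Apply this with $F(w) = n^2 f(w/n)$, which is bounded and Borel because $f$ is. The left-hand side becomes $-\log \int \exp(-n^2 f)\,d\gamma = n^2 \mathcal{P}(f)$. On the right-hand side, substitute $v_t = n\,\alpha_t$. This map is a bijection between admissible controls: progressive measurability is preserved, and $|v_t|^2 = n^2 \norm{\alpha_t}_2^2$, so the integrability conditions correspond. Then
\[
F\Big(W_1 + \int_0^1 v\Big) = n^2 f\Big(Z_1 + \int_0^1 \alpha\Big), \qquad \frac12\int_0^1 |v_t|^2\,dt = n^2\cdot \frac12\int_0^1 \norm{\alpha_t}_2^2\,dt.
\]
Dividing both sides by $n^2$ yields \eqref{eq: matrix Borell formula}.

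For the classical formula itself, I would cite \cite{BoueDupuis1998variational} and \cite{Borell2000}. If a self-contained argument is wanted, I would follow Lehec's proof in two steps.

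1. For the inequality $\leq$, given a control $v$, I would use Girsanov's theorem to identify the law $\nu$ of $W_1 + \int_0^1 v$. Its relative entropy with respect to Wiener measure on path space is at most $\frac12\mathbb{E}\int_0^1|v|^2$, and entropy can only decrease under the projection to time $1$. The Gibbs variational principle (the Legendre duality of Lemma \ref{lem: Legendre transform of pressure}, or the argument of Lemma \ref{lem: entropy minimizer}) then gives $-\log\mathbb{E}e^{-F(W_1)} \le \int F\,d\nu + h(\nu\mid\gamma_d)$.
2. For the inequality $\geq$, I would take $v_t = \nabla \log P_{1-t} e^{-F}(W_t+\int_0^t v)$, the F\"ollmer drift, which attains equality. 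For merely bounded Borel $F$ this step needs care, since the drift may fail to be square-integrable near $t=1$. I would handle it by proving the formula first for smooth bounded $F$ and then approximating.

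The main obstacle is this last approximation/attainment step for the lower bound, together with verifying that the filtrations match; the scaling itself is routine. Since the lemma is stated as a consequence of \cite{BoueDupuis1998variational,Borell2000} (see \cite[Lemma 6.2]{GJNPmatrixcontrols}), I would rely on the cited classical result rather than reprove it, and present only the normalization computation above.
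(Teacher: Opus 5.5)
Your proposal is correct and matches the paper's approach. The paper gives no independent proof: it obtains the lemma by applying the dimensional renormalization to the classical Bou\'e--Dupuis/Borell formula (citing \cite[Lemma 6.2]{GJNPmatrixcontrols}). Your substitution $Z_t=W_t/n$ in the orthonormal basis $\mathcal{B}$, with $F(w)=n^2 f(w/n)$ and $v=n\alpha$, is that renormalization written out, and the optional self-contained sketch via Girsanov and the F\"ollmer drift is not needed given the citation.
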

	
	To prove Theorem \ref{thm: main}, we will show that there exists a minimizer of \eqref{eq: matrix Borell formula} which also satisfies $\norm{\alpha_t}_\infty \leq L$.  We remark that, if we know a minimizer exists, we could then apply the argument of \cite[Lemma 5.8, Step 3]{Jekel2026FIG} which shows that any candidate for the minimizer can be truncated to produce another candidate that satisfies $\norm{\alpha_t}_\infty \leq L$.  The existence of a minimizer can be deduced from general criteria such as \cite[Ch.~2, Theorem 5.3]{YongZhou1999}, but as stated this result relies on changing the underlying probability space and filtration.  To avoid this issue, it is convenient to use a more explicit description of the minimizer based the solution to an associated PDE (see e.g.\ \cite[\S IV.4]{FlemingSoner2006}).
	
	Define a function $u: [0,1] \times (\mathbb{M}_n)_{\sa}^m \to \R$ by
	\begin{equation} \label{eq: definition of u}
		u(t,X) = -\frac{1}{n^2} \log \int_{(\mathbb{M}_n)_{\sa}^m} \exp(-n^2 f(X+Y)) d\gamma_{1-t}(Y),
	\end{equation}
	where $\gamma_t$ is the $t^{1/2}$ rescaling of $\gamma$.  It is well known that $\exp(-n^2 u)$ solves the heat equation backward in time for an appropriately normalized Laplacian.  Namely, letting $\Delta^{(n)} = (1/n^2) \Delta$, we have
	\begin{equation} \label{eq: heat equation}
		-\partial_t (\exp(-n^2 u)) = \frac{1}{2} \Delta^{(n)} (\exp(-n^2 u)).
	\end{equation}
	(Note that the definition of $\Delta$ here is based on the normalized inner product $\ip{\cdot,\cdot}_2$ or the orthonormal basis \eqref{eq: ONB}.)  Rewriting \eqref{eq: heat equation} in terms of $u$ (i.e.\ the Hopf--Cole transformation) yields
	\begin{equation} \label{eq: HJB equation}
		-\partial_t u = \frac{1}{2} \Delta^{(n)} u - \frac{1}{2} \norm{\nabla u}_2^2, \qquad u(1,\cdot) = f.
	\end{equation}
	Moreover, Borell's formula can be used to express $u$ as the solution of a stochastic optimization problem (a special case of a much more general theory of Hamilton--Jacobi--Bellman equations):
	\begin{equation} \label{eq: matrix Borell general time}
		u(t,X) = \inf_{\alpha} \mathbb{E} \left[ f\left(X + Z_{t,1} + \int_t^1 \alpha \right) + \frac{1}{2} \int_t^1 \norm{\alpha}_2^2 \right],
	\end{equation}
	where $Z_{t,1} = Z_1 - Z_t$, and we use the augmented filtration on $[t,1]$ generated by the process $Z_{t,s} = Z_s - Z_t$ for $s \in [t,1]$ (i.e., the same setup as  in Lemma \ref{lem: BD formula for matrices} but starting time at $t$ instead of zero).  Observe that if $f$ is $L$-Lipschitz with respect to $\norm{\cdot}_1$, then so is $u(t,\cdot)$ for any $t \in [0,1]$, since \eqref{eq: matrix Borell general time} expresses $u(t,\cdot)$ as the infimum of a family of $L$-Lipschitz functions of $X$ with respect to $\norm{\cdot}_1$.  Furthermore, if $f$ is $C^2$, then $u$ is $C^{1,2}([0,1] \times (\mathbb{M}_n)_{\sa}^m)$ by standard facts about the heat equation \eqref{eq: heat equation} or about Gaussian convolution.
	
	\begin{lemma} \label{lem: explicit minimizer}
		Let $f$ be bounded, $C^2$, and $L$-Lipschitz with respect to $\norm{\cdot}_1$, and let $u$ be given by \eqref{eq: definition of u}.  Let $\mathcal{F}_t$ be the augmented filtration generated by the Brownian motion $Z_t$ on $[0,1]$.  For any $X_0 \in (\mathbb{M}_n)_{\sa}^m$ there exists a unique solution to the stochastic integral equation
		\[
		X_t = X_0 + Z_t - \int_0^t \nabla u(s,X_s)\,ds.
		\]
		Moreover, when $X_0 = 0$, the process $\alpha_t = -\nabla u(t,X_t)$ achieves the minimum in \eqref{eq: matrix Borell formula}.
	\end{lemma}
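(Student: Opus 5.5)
The plan is to handle existence/uniqueness pathwise by an ODE argument, and to handle optimality by a standard verification argument: apply Itô's formula to $u(t,X^\alpha_t)$ along an arbitrary controlled process and use the HJB equation \eqref{eq: HJB equation}.

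\emph{Existence and uniqueness.} Write $Y_t = X_t - Z_t$. The integral equation is then equivalent to the random ODE
\[
\dot Y_t = -\nabla u(t, Y_t + Z_t), \qquad Y_0 = X_0,
\]
which we solve for each fixed continuous Brownian path. Since $f$ is $C^2$, $u \in C^{1,2}$, so $\nabla u$ is jointly continuous. Its spatial derivative is continuous on $[0,1]\times(\mathbb{M}_n)_{\sa}^m$, so $(t,y)\mapsto -\nabla u(t,y+Z_t(\omega))$ is continuous in $t$ and locally Lipschitz in $y$, uniformly on compacts.

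Because each $u(t,\cdot)$ is $L$-Lipschitz for $\norm{\cdot}_1$, Lemma \ref{lem: Lipschitz gradient} gives $\norm{\nabla u(t,x)}_\infty \le L$. Hence $\norm{\nabla u(t,x)}_2 \le \sqrt{m}\,L$, so the vector field is globally bounded. Local Lipschitz continuity plus boundedness gives, by Picard--Lindelöf and non-explosion, a unique global solution on $[0,1]$ for every $\omega$. The solution is adapted to $\mathcal{F}_t$, since it is the limit of Picard iterates, each of which is an adapted functional of $(Z_s)_{s\le t}$. Uniqueness among all (adapted) solutions of the integral equation is also pathwise, hence immediate.

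\emph{Verification.} Take $X_0=0$ and any admissible $\alpha$, and set $X^\alpha_t = Z_t + \int_0^t \alpha_s\,ds$. The Brownian motion has independent coordinates in the orthonormal basis \eqref{eq: ONB} with quadratic variation $t/n^2$ each, so the Itô correction is $\tfrac12\Delta^{(n)}u$. Itô's formula on $[0,1-\varepsilon]$ gives
\[
du(t,X^\alpha_t) = \big(\partial_t u + \tfrac12 \Delta^{(n)}u + \ip{\nabla u,\alpha_t}_2\big)\,dt + \ip{\nabla u, dZ_t}_2 .
\]
By \eqref{eq: HJB equation}, $\partial_t u + \tfrac12\Delta^{(n)} u = \tfrac12\norm{\nabla u}_2^2$. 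Completing the square then gives
\[
du(t,X^\alpha_t) = \tfrac12\norm{\nabla u + \alpha_t}_2^2\,dt - \tfrac12\norm{\alpha_t}_2^2\,dt + \ip{\nabla u, dZ_t}_2 .
\]
Since $\nabla u$ is bounded, the stochastic integral is a true martingale. Taking expectations yields
\[
\mathbb{E}\Big[u(1-\varepsilon,X^\alpha_{1-\varepsilon})\Big] + \tfrac12\,\mathbb{E}\int_0^{1-\varepsilon}\norm{\alpha_t}_2^2\,dt = u(0,0) + \tfrac12\,\mathbb{E}\int_0^{1-\varepsilon}\norm{\nabla u + \alpha_t}_2^2\,dt .
\]
Now let $\varepsilon\to 0$. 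Here $u$ is continuous on $[0,1]\times(\mathbb{M}_n)_{\sa}^m$ and bounded (because $f$ is bounded), with $u(1,\cdot)=f$, so dominated and monotone convergence give
\[
\mathbb{E}\Big[f(X^\alpha_1) + \tfrac12\int_0^1\norm{\alpha_t}_2^2\,dt\Big] = u(0,0) + \tfrac12\,\mathbb{E}\int_0^1\norm{\nabla u(t,X^\alpha_t) + \alpha_t}_2^2\,dt \;\ge\; u(0,0).
\]
By \eqref{eq: definition of u} and Lemma \ref{lem: BD formula for matrices}, $u(0,0)=\mathcal{P}(f)$.

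Now take $\alpha_t = -\nabla u(t,X_t)$, with $X$ the solution constructed above. Then $X^\alpha = X$, so the last integrand vanishes identically and equality holds. This $\alpha$ is progressively measurable and bounded in $\norm{\cdot}_2$, so it is admissible, and therefore it attains the infimum in \eqref{eq: matrix Borell formula}.

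\emph{Main obstacle.} The delicate points are regularity near $t=1$ and the normalization in Itô's formula. Applying Itô's formula only on $[0,1-\varepsilon]$ and passing to the limit using continuity and boundedness of $u$ avoids needing any control of the Hessian of $f$ at $t=1$. The normalization must be checked against \eqref{eq: ONB} so that the generator is exactly $\tfrac12\Delta^{(n)}$, matching \eqref{eq: heat equation}.
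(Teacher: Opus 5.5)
Your proof is correct. The existence/uniqueness half matches the paper's: both use pathwise Picard iteration, with $\norm{\nabla u}_\infty\le L$ preventing explosion. The paper gets measurability from a continuous solution map $C\mapsto X$; you get it from adaptedness of the iterates.

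The optimality half takes a genuinely different route. The paper applies It\^o's formula only along the candidate trajectory to show that its cost equals $u(0,0)$. It then relies on the imported Bou{\'e}--Dupuis/Borell formula (Lemma \ref{lem: BD formula for matrices}) to know that this value is the infimum. You instead run a verification argument along an arbitrary admissible control and complete the square, obtaining
\[
\mathbb{E}\Bigl[f(X^\alpha_1)+\tfrac12\int_0^1\norm{\alpha_t}_2^2\,dt\Bigr]=u(0,0)+\tfrac12\,\mathbb{E}\int_0^1\norm{\nabla u(t,X^\alpha_t)+\alpha_t}_2^2\,dt .
\]
This buys three things:
\begin{enumerate}
\item It proves the lower bound instead of citing it. So for bounded $C^2$ Lipschitz $f$ you actually re-derive \eqref{eq: matrix Borell formula}, and $u(0,0)=\mathcal{P}(f)$ then needs only \eqref{eq: definition of u} with $\gamma_1=\gamma$.
\item The explicit gap, together with pathwise uniqueness, shows the optimal control is unique up to null sets.
\item Truncating to $[0,1-\varepsilon]$ in the It\^o step avoids using $C^{1,2}$ regularity of $u$ at the terminal time.
\end{enumerate}
The paper's route is shorter once the cited formula is granted.

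One small point on self-containment: you use that $u(t,\cdot)$ is $L$-Lipschitz without justification. The paper gets this from \eqref{eq: matrix Borell general time}. It also follows directly from \eqref{eq: definition of u}: $f(X+Y)\le f(X'+Y)+L\norm{X-X'}_1$ for every $Y$, and $g\mapsto-\frac{1}{n^2}\log\int e^{-n^2g}\,d\gamma_{1-t}$ is monotone and commutes with adding constants.
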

	
	\begin{proof}
		We claim that in fact for any continuous function $C(t)$ on $[0,1]$, there exists a unique solution $X(t)$ to
		\begin{equation} \label{eq: abstract integral equation}
			X(t) = X_0 + C(t) - \int_0^t \nabla u(s,X(s))\,ds.
		\end{equation}
		For every $s\in[0,1)$, every admissible control $\alpha$, and every
		$X,Y\in(\mathbb{M}_n)_{\sa}^m$, the Lipschitz assumption on $f$ gives
		\[\mathbb{E}\left[
		f\left(X+Z_{s,1}+\int_s^1\alpha_t\,dt\right)
		+\frac{1}{2}\int_s^1\norm{\alpha_t}_2^2\,dt
		\right]\leq
		\mathbb{E}\left[
		f\left(Y+Z_{s,1}+\int_s^1\alpha_t\,dt\right)
		+\frac{1}{2}\int_s^1\norm{\alpha_t}_2^2\,dt
		\right]
		+L\norm{X-Y}_1.
		\]
		Since the class of admissible controls is independent of the initial
		point, taking the infimum over $\alpha$ on both sides and applying
		\eqref{eq: matrix Borell general time} yields
		\[
		u(s,X)\leq u(s,Y)+L\norm{X-Y}_1.
		\]
		By symmetry, exchanging $X$ and $Y$, we obtain
		\[
		|u(s,X)-u(s,Y)|\leq L\norm{X-Y}_1.
		\]
		The same inequality holds for $s=1$, since $u(1,\cdot)=f$.
		Therefore $u(s,\cdot)$ is $L$-Lipschitz with respect to $\norm{\cdot}_1$ for every $s\in[0,1]$, Lemma~\ref{lem: Lipschitz gradient} implies that
		\[
		\norm{\nabla u(s,X)}_\infty\leq L
		\]
		for every $s\in[0,1]$ and $X\in(\mathbb{M}_n)_{\sa}^m$. In particular,
		$\norm{\nabla u(s,X_s)}_\infty\leq L$.  Let
		\[
		M = \norm{X_0}_\infty + \sup_{t \in [0,1]} \norm{C(t)}_\infty + L.
		\]
		For any continuous $F: [0,1] \to (\mathbb{M}_n)_{\sa}^m$, let
		\[
		\mathcal{I}(F)(t) = X_0 + C(t) - \int_0^t \nabla u(s,F(s))\,ds.
		\]
		Then $\norm{\mathcal{I}(F)(t)}_\infty \leq M$ for $t \in [0,1]$.  Thus, the $M$-ball in $C([0,1],(\mathbb{M}_n)_{\sa}^m)$ is invariant under $\mathcal{I}$.  Since $u$ is smooth, $\nabla u$ is Lipschitz on the $M$-ball of $\mathbb{M}_n)_{\sa}^m$ with some Lipschitz constant $L_M'$, and we have
		\[
		\norm{\mathcal{I}(F)(t) - \mathcal{I}(G)(t)}_\infty \leq L_M' \int_0^t \norm{F(s) - G(s)}_\infty \,ds.
		\]
		Thus, the solution to \eqref{eq: abstract integral equation} can be constructed by Picard iteration as usual.  The rate of convergence of the Picard iterates also depends only on $L_M'$.  Hence, it is not hard to show that the solution $X$ depends continuously on $C$ in $C([0,1],(\mathbb{M}_n)_{\sa}^m)$, thus producing a continuous map $\mathcal{X}: C([0,1],(\mathbb{M}_n)_{\sa}^m) \to C([0,1],(\mathbb{M}_n)_{\sa}^m)$ (which also depends on $X_0$).  Since Brownian motion is a random variable taking values in $C([0,1],(\mathbb{M}_n)_{\sa}^m)$, we can apply the continuous function $\mathcal{X}$ to it and obtain another random variable in $C([0,1],(\mathbb{M}_n)_{\sa}^m)$ which is the asserted solution to the integral equation.  It is also evident $\mathcal{I}(F)(t)$ only depends on $F|_{[0,t]}$, hence the same is true of $\mathcal{X}(F)(t)$, whence we see that $\mathcal{X}(Z)$ is progressively measurable.
		
		Now taking $X_0 = 0$ and letting $X_t = \mathcal{X}(Z)(t)$ and $\alpha_t = -\nabla u(t,X_t)$, we see that $\alpha_t$ is progressively measurable and
		\[
		Z_1^{(n)} + \int_0^1 \alpha_t\,dt = X_1.
		\]
		Moreover, the stochastic integral equation can be written in differential form as
		\[
		dX_t = dZ_t^{(n)} - \nabla u(t,X_t)\,dt.
		\]
		By the It{\^o} formula,
		\[
		d u(t,X_t) = \left[ \partial_t u(t,X_t) - \norm{\nabla u(t,X_t)}_2^2 + \frac{1}{2} \Delta^{(n)} u(t,X_t) \right]\,dt + \ip{\nabla u(t,X_t), dZ_t^{(n)}}_2.
		\]
		Applying the PDE \eqref{eq: HJB equation}, we obtain
		\[
		du(t,X_t) = \ip{\nabla u(t,X_t), dZ_t^{(n)}}_2 - \frac{1}{2} \norm{\nabla u(t,X_t)}_2^2\,dt.
		\]
		Now taking the
		expectation and integrating from $0$ to $1$ yields
		\[
		\mathbb{E} u(1,X_1) - u(0,0)
		=
		-\frac{1}{2}\mathbb{E}\int_0^1
		\norm{\nabla u(t,X_t)}_2^2\,dt,
		\]
		or
		\[
		u(0,0)
		=
		\mathbb{E}\left[
		f(X_1)
		+
		\frac{1}{2}\int_0^1
		\norm{\nabla u(t,X_t)}_2^2\,dt
		\right].
		\]
		Recall that by construction $u(0,0)$ is the left-hand side of \eqref{eq: matrix Borell formula}.  Thus, we have shown that $\alpha_t = -\nabla u(t,X_t)$ achieves the infimum on the right-hand side of \eqref{eq: matrix Borell formula}.  Moreover, $\norm{\alpha_t}_\infty \leq L$ since $\norm{\nabla u}_\infty \leq L$.
	\end{proof}
	
	Next, we prove Theorem \ref{thm: main} in the case that $f$ is $C^2$.  Afterward, we will handle the general case by an approximation argument.
	
	\begin{theorem} \label{thm: smooth case}
		Assume that $f$ is $C^2$ and bounded.  Let $\alpha$ be a minimizer on the right-hand side of \eqref{eq: matrix Borell formula} with $\norm{\alpha}_\infty \leq L$.  Let $X_t = Z_t + \int_0^t \alpha$.  Then $X_1 \sim \mu_f$ and $\norm{Z_1 - X_1}_\infty \leq L$.
	\end{theorem}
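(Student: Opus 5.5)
The operator-norm bound is immediate: since $X_1 - Z_1 = \int_0^1 \alpha_t\,dt$ and $\norm{\alpha_t}_\infty \leq L$ for all $t$, the triangle inequality for the Bochner integral gives $\norm{X_1 - Z_1}_\infty \leq \int_0^1 \norm{\alpha_t}_\infty\,dt \leq L$ almost surely. The substance is showing $X_1 \sim \mu_f$. For that I use the entropy/pressure duality from Section 2 rather than the explicit SDE. This way the argument works for an arbitrary minimizer $\alpha$, not just the one from Lemma \ref{lem: explicit minimizer}.

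Let $\nu$ be the law of $X_1 = Z_1 + \int_0^1 \alpha$. First I show
\[
\frac{1}{n^2} h(\nu \mid \gamma) \leq \frac{1}{2}\mathbb{E}\int_0^1 \norm{\alpha_t}_2^2\,dt .
\]
Use Lemma \ref{lem: Legendre transform of pressure}: for any $g \in C_b$, Lemma \ref{lem: BD formula for matrices} applied to $g$ with the same control $\alpha$ gives
\[
\mathcal{P}(g) \leq \mathbb{E}\left[ g(X_1) + \tfrac12 \int_0^1 \norm{\alpha}_2^2 \right] = \int g\,d\nu + \tfrac12 \mathbb{E}\int_0^1\norm{\alpha}_2^2 .
\]
Hence $\mathcal{P}(g) - \int g\,d\nu \leq \frac12\mathbb{E}\int\norm{\alpha}_2^2$. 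Taking the supremum over $g$ yields the entropy bound, and in particular $\nu \ll \gamma$.

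Next, since $\alpha$ is a minimizer for $f$, we have $\mathcal{P}(f) = \int f\,d\nu + \frac12\mathbb{E}\int_0^1\norm{\alpha}_2^2$. Combining with the previous step,
\[
\frac{1}{n^2}h(\nu\mid\gamma) + \int f\,d\nu \leq \mathcal{P}(f).
\]
Here $f$ is bounded, so $\int|f|\,d\nu<\infty$ and $\nu$ lies in the domain of Lemma \ref{lem: entropy minimizer}. That lemma says the left side is at least $\mathcal{P}(f)$, with equality only at $\mu_f$. Therefore equality holds and $\nu = \mu_f$ by uniqueness.

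The main (mild) obstacle is the first step: justifying the Legendre duality for a general Borel $\nu$ and making sure Lemma \ref{lem: BD formula for matrices} applies to each $g \in C_b$ with the admissible control $\alpha$. Admissibility holds because $\alpha$ is progressively measurable and bounded in $\norm{\cdot}_\infty$, hence square integrable, since $\norm{\cdot}_2 \leq \sqrt{m}\norm{\cdot}_\infty$ for the normalized trace. As a cross-check, for the specific minimizer $\alpha_t = -\nabla u(t,X_t)$ one could instead verify $X_1\sim\mu_f$ via Girsanov, using the martingale $\exp(-n^2 u(t,X_t))$ as in the proof of Lemma \ref{lem: explicit minimizer}. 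The duality route is cleaner and needs no extra regularity.
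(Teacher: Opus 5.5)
Your proposal is correct and is essentially the paper's argument. For every $g \in C_b$ you bound $\mathcal{P}(g) - \int g\,d\nu$ by the control cost via Lemma~\ref{lem: BD formula for matrices}, use the minimizing property of $\alpha$ at $g=f$, and conclude $\nu=\mu_f$ from the uniqueness in Lemma~\ref{lem: entropy minimizer}; the operator-norm bound comes from $\norm{\int_0^1\alpha}_\infty\le\int_0^1\norm{\alpha}_\infty$, exactly as in the paper. The only cosmetic difference is that you keep just the inequality $\frac{1}{n^2}h(\nu\mid\gamma)\le\frac12\mathbb{E}\int_0^1\norm{\alpha_t}_2^2\,dt$ and let Lemma~\ref{lem: entropy minimizer} supply the reverse bound, whereas the paper first identifies $\frac{1}{n^2}h(\nu\mid\gamma)$ exactly by noting that $g=f\in C_b$ attains the supremum in Lemma~\ref{lem: Legendre transform of pressure}.
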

	
	\begin{proof}
		Let $\nu$ be the distribution of $X_1$.  By Lemma \ref{lem: entropy minimizer}, it suffices to show that $\nu$ minimizes $\mu \mapsto h(\mu \mid \gamma) - \int f\,d\mu$.  To this end, we first need to evaluate $h(\nu \mid \gamma)$.  For every bounded continuous function $g$ (this class suffices for entropy duality, as it is explicitly recalled at the beginning of~\cite[Section~2]{Lehec2013}), by definition of $\mathcal{P}(g)$,
		\[
		\mathcal{P}(g) \leq \mathbb{E}\left[ g(X_1) + \frac{1}{2} \int_0^1 \norm{\alpha}_2^2 \right] = \int g\,d\nu + \frac{1}{2} \int_0^1 \mathbb{E} \norm{\alpha}_2^2;
		\]
		moreover, equality holds in the case that $g = f$ since $\alpha$ was chosen to be a minimizer.  Therefore,
		\[
		\mathcal{P}(g) - \int g\,d\nu \leq \mathcal{P}(f) - \int f\,d\nu,
		\]
		so by Lemma \ref{lem: Legendre transform of pressure}
		\[
		\frac{1}{n^2} h(\nu \mid \gamma) = \mathcal{P}(f) - \int f\,d\nu = \frac{1}{2} \int_0^1 \mathbb{E} \norm{\alpha}_2^2.
		\]
		Now let $\mu$ be another measure.  By Lemma \ref{lem: Legendre transform of pressure},
		\[
		\frac{1}{n^2} h(\mu \mid \gamma) + \int f\,d\mu \geq \mathcal{P}(f) = \frac{1}{n^2} h(\nu \mid \gamma) + \int f\,d\nu.
		\]
		Therefore, $\nu$ is a minimizer of $\mu \mapsto h(\mu \mid \gamma) + \int f\,d\mu$, so by Lemma \ref{lem: entropy minimizer}, we have $\nu = \mu_f$.
		
		Hence, $X_1 \sim \mu_f$ as asserted.  Moreover,
		\[
		\norm{X_1 - Z_1}_\infty = \norm*{\int_0^1 \alpha }_\infty \leq \int_0^1 \norm{\alpha}_\infty \leq L.  \qedhere
		\]
	\end{proof}
	
	\begin{proof}[Conclusion of the proof of Theorem \ref{thm: main}]
		Consider a general $f: (\mathbb{M}_n)_{\sa}^m \to \R$ that is $L$-Lipschitz with respect to $\norm{\cdot}_1$.  For $\delta > 0$, let $\theta_\delta(s)= \max\{-\delta^{-1},\min\{s,\delta^{-1}\}\}$ for $s\in \mathbb R$
		and let
		\[
		f_\delta(x) = \int_{(\mathbb{M}_n)_{\sa}^m} \theta_\delta\bigl(f(x-\delta^{1/2}y)\bigr)\,d \gamma(y)
		\]
		be the approximation of $f$ obtained by solving the heat equation or by Gaussian convolution, after truncation using $\theta_\delta$.  Set $f_0=f$.  Observe the following:
		\begin{enumerate}
			\item $f_\delta\to f$ locally uniformly as $\delta\to 0^+$.
			\item $f_\delta$ is $L$-Lipschitz with respect to $\norm{\cdot}_1$.
			\item $f_\delta$ is bounded and\ $C^\infty$ and its derivatives of all orders are bounded for each fixed $\delta>0$.
			\item There are some constants $A$ and $B$ such that $|f_\delta(x)| \leq A + B \norm{x}_1$ for all $\delta \in [0,1]$.
			\item Suppose $\varphi \in C((\mathbb{M}_n)_{\sa}^m)$ with $|\varphi(x)| \leq C \exp(c \norm{x}_1)$ for some constants $c$ and $C$.  Then $|\varphi| \exp(-n^2 f_\delta)$ is also bounded by some function of the form $C' \exp(c' \norm{x}_1)$ uniformly for $\delta \in [0,1]$, hence by the dominated convergence theorem
			\[
			\lim_{\delta \to 0^+} \int_{(\mathbb{M}_n)_{\sa}^m} \varphi \exp(-n^2 f_\delta)\,d\gamma = \int_{(\mathbb{M}_n)_{\sa}^m} \varphi \exp(-n^2 f)\,d\gamma.
			\]
			\item In particular, $\mu_{f_\delta} \to \mu_f$ in the weak-$*$ topology as $\delta \to 0^+$.
		\end{enumerate}
		Now apply Theorem \ref{thm: smooth case} to $\mu_{f_\delta}$ to obtain random variables $Z \sim \gamma$ and $X_\delta \sim \mu_{f_\delta}$ such that $\norm{Z - X_\delta}_\infty \leq L$ almost surely.  Let $\mathcal{S}$ be the set of probability measures given as the distribution of $(Y,Z)$ where $Z \sim \gamma$ and $\norm{Y - Z}_\infty \leq L$.  Observe that $\mathcal{S}$ is tight, i.e.\ weak-$*$ compact and hence relatively weak-$*$ compact; for instance, one can use the map $(Y,Z) \mapsto (Y-Z,Z)$ to transform $\mathcal{S}$ into the set of measures whose second marginal is $\gamma$ and whose first marginal is supported in $\{x: \norm{x}_\infty \leq L\}$.  Therefore, there exists a sequence $\delta_k \to 0$ such that $(X_{\delta_k},Z)$ converges in distribution as $k \to \infty$.  Let $(X,Z)$ be a pair realizing the limiting distribution.  Then $Z \sim \gamma$ and $X \sim \mu_f$ since $X_{\delta_k} \sim \mu_{f_{\delta_k}} \to \mu_f$, and $\norm{X-Z}_\infty\leq L$ almost surely by the Portmanteau theorem, since
		$\{(y,z):\norm{y-z}_\infty\leq L\}$ is closed.
	\end{proof}
	
	\begin{remark}
		There is a version of Theorem \ref{thm: main} for non-self-adjoint matrices as well.  Let $f: \mathbb{M}_n^m \to \R$ be $L$-Lipschitz with respect to $\norm{\cdot}_1$, and let
		\[
		d\mu_f(x) = \text{\emph{(constant)}} e^{-n^2(\norm{x}_2^2/2 + f(x))}\,dx
		\]
		on $\mathbb{M}_n^m$.  Let $\tilde{\gamma} = \mu_0$ be the normalized Gaussian measure on $\mathbb{M}_n^m$ (so each coordinate is a Gaussian Ginibre rather than GUE random matrix, or the operator real and imaginary parts are GUE).  Then there exist random variables $X \sim \mu_f$ and $Z \sim \tilde{\gamma}$ with $\norm{X - Z}_\infty \leq L$.  The proof is exactly the same as that of Theorem \ref{thm: main}.  Here the gradient for $f$ on $\mathbb{M}_n^m$ is understood with respect to the real inner product $\re \ip{x,y}_2$.
	\end{remark}
	
	\begin{remark} \label{rem: alternative proof}
		It is worthwhile noting that an alternative proof can be obtained by adapting the synchronous coupling argument for Langevin diffusion of Khudiakova,
		Maas, and Pedrotti \cite[Theorem~2.1, Step~1]{KhudiakovaMaasPedrotti2025}.  For general background on Markov diffusion process, see e.g.\ \cite{BGL2014}.  For smooth $f$ such that $\nabla f$ is globally Lipschitz with respect to $\norm{\cdot}_2$, let $({Z}_t)_{t\geq 0}$ be a $m$-tuple of independent GUE Brownian motions normalized so that $Z_1^{(n)}\sim \gamma$ 
		\begin{align}
			dX_t &= -(X_t+\nabla f(X_t))\,dt+\sqrt{2}\,d{Z}_t,\\
			dY_t &= -Y_t\,dt+\sqrt{2}\,d{Z}_t,
		\end{align}
		with $X_0=Y_0\sim\mu_f$, independent of the Brownian motion.
		Then $X_t\sim\mu_f$ for every $t\geq0$. Indeed, boundedness of the second derivatives of $f$ implies that $\nabla f$ is globally Lipschitz in $\norm{\cdot}_2$. Hence the drifts of
		both equations are globally Lipschitz for the $\norm{\cdot}_2$, so standard
		results for SDEs give unique strong solutions for all $t\geq0$.
		Moreover, integration by parts shows that $\mu_f$ is invariant
		for the first diffusion, so the choice $X_0\sim\mu_f$ makes
		$(X_t)_{t\geq0}$ stationary. On the other hand, the distribution
		of $Y_t$ converges weakly to $\gamma$. In the case considered
		here, the difference satisfies
		\[
		d(X_t - Y_t) = -(X_t - Y_t)\,dt - \nabla f(X_s)\,dt
		\]
		as the Brownian terms cancel, and hence by the standard integrating-factor computation,
		\begin{equation} \label{eq: exponential integrating factor}
			X_t-Y_t=-\int_0^t e^{-(t-s)}\nabla f(X_s)\,ds,
		\end{equation}
		Since $f$ is $L$-Lipschitz with respect to $\norm{\cdot}_1$,
		Lemma~\ref{lem: Lipschitz gradient} gives
		$\norm{\nabla f(x)}_\infty\leq L$. Hence
		$\norm{X_t-Y_t}_\infty\leq L(1-e^{-t})$ almost surely.
		Letting $t\to\infty$ and using lower semi-continuity of the $L^{\infty}$-Wasserstein distance
		$W_\infty$, we obtain $W_\infty(\mu_f,\gamma)\leq L$.
		The general case follows by approximation as in our original proof of Theorem \ref{thm: main}.
		
		Furthermore, this method also establishes a more general estimate.  Namely, \eqref{eq: exponential integrating factor} and the fact that $X_s \sim \mu_f$ implies that for any $p, q \in [1,\infty]$, we have
		\begin{equation}
			\norm{\norm{X_t - Y_t}_p}_{L^q(\Omega,\mathbb{P})} \leq \int_0^t e^{-(t-s)} \norm{\norm{\nabla f(X_s)}_p}_{L^q(\Omega,\mathbb{P})} \,ds = (1 - e^{-t}) \norm{\norm{\nabla f}_p}_{L^q(\mu_f)},
		\end{equation}
		where $(\Omega,\mathbb{P})$ denotes the underlying probability space.  Then by approximation arguments, one can obtain a coupling $(X,Z)$ of $\mu_f$ and $\gamma$ with $\norm{\norm{\nabla f}_p}_{L^q(\mu_f)}$.  This provides an $L^q$-version of the log-Sobolev-plus-Talagrand inequality with respect to the norm $\norm{\cdot}_p$ on $(\mathbb{M}_n)_{\sa}^m$.
		
		Note that the same proof can be applied to a measures on Euclidean space $\R^m$ of the form $e^{-f(x) - |x|^2/2}\,dx$, with the $\norm{\cdot}_p$ replaced by \emph{any} choice of norm $\norm{\cdot}$ on $\R^m$, and here the Lipschitz hypothesis for $f$ is assumed with respect to the dual norm
		\[
		\norm{x}' = \sup \{ |\ip{x,y}|: \norm{y} \leq 1 \}.
		\]
		In other words, the $L^\infty$ and $L^p$ bounds in \cite[Theorem 2.1, Theorem A.1]{KhudiakovaMaasPedrotti2025} can be generalized to allow any choice of norm on $\R^m$, provided that the base measure is standard Gaussian.
	\end{remark}
	
	\subsection*{Funding}
	
	C-P. Diez acknowledges support from ISF grant 2574/24 and NSF--BSF grant 2022707.
	
	\bibliographystyle{plain}
	\bibliography{references}
	
\end{document}